\theoremstyle:=definition,remark,plain\do{%
        \expandafter\g@addto@macro\csname th@\theoremstyle\endcsname{%
            \addtolength\thm@preskip\parskip
            }%
        }
\newtheorem{theorem}{Theorem}[section]
\newtheorem*{theorem*}{Theorem}
\newtheorem{lemma}[theorem]{Lemma}
\newtheorem{prop}[theorem]{Proposition}
\theoremstyle{definition}
\newtheorem{remark}[theorem]{Remark}
\newtheorem*{remark*}{Remark}
\newcommand{\mb}[1]{\mathbb{#1}}
\newcommand{\st}{\mbox{\em st}}
\newcommand{\sta}{\mbox{st}}
\begin{document} 

\title{A Geometric Hall-type Theorem}  

\author{Andreas F. Holmsen}
\address{Andreas F. Holmsen, 
Department of Mathematical Sciences, 
KAIST, 
Daejeon, South Korea.  \hfill \hfill }
\email{andreash@kaist.edu}

\author{Leonardo Martinez-Sandoval}
\address{Leonardo Martinez-Sandoval, 
Instituto de Matemáticas, 
 National University of Mexico at Quer{\'e}taro, 
Juriquilla,  Quer{\'e}taro 76230, Mexico,  \hfill \hfill \linebreak
 and \linebreak
 Univesit{\'e} de Montpellier, 
Place Eug{\'e}ne Bataillon, 
34095 Montpellier Cedex, France.  \hfill \hfill}
\email{leomtz@im.unam.mx}

\author{Luis Montejano}
\address{Luis Montejano, 
Instituto de Matemáticas, 
 National University of Mexico at Quer{\'e}taro, 
Juriquilla , Quer{\'e}taro 76230, Mexico. \hfill \hfill}
\email{luis@matem.unam.mx}

\keywords{Hall's theorem, topological combinatorics, points in general position, matroids.}

\subjclass[2010]{Primary 05D15, 52C35}

\thanks{The first author would like to thank the Instituto de Matem{\'a}ticas, UNAM at Querétaro for their hospitality and support during his visit. The second and third authors wish to acknowledge support from CONACyT under Project 166306,  support from PAPIIT--UNAM under Project IN112614 and support from ECOS Nord project M13M01.
}

\maketitle 

\begin{abstract} We introduce a geometric generalization of  Hall's marriage theorem. For any family $F = \{X_1, \dots, X_m\}$ of finite sets in $\mb{R}^d$, we give conditions under which it is possible to choose a point  $x_i\in X_i$ for every $1\leq i \leq m$  in such a way that  the points $\{x_1,...,x_m\}\subset \mb{R}^d$ are in general position.  We give two proofs, one elementary proof requiring slightly stronger conditions, and one proof using topological techniques in the spirit of Aharoni and Haxell's celebrated generalization of Hall's theorem. 
\end{abstract}

\section{Introduction}

\subsection{Background} Let $F = \{S_1, \dots, S_m\}$ be a family of finite subsets of a common ground set $E$. A {\em system of distinct representatives} is an $m$-element subset $\{x_1, \dots, x_m\}\subset E$ such that $x_i \in S_i$ for all $1\leq i \leq m$. A classical result in combinatorics is {\em Hall's marriage theorem} \cite{hall} which states that a family $F = \{S_1, \dots, S_m\}$ has a system of distinct representatives if and only if $\left| \bigcup_{i\in I} S_i \right| \geq | I |$ for every non-empty subset $I\subset \{1, \dots, m\}$.

In 2000, Aharoni and Haxell \cite{aharoni1} presented a remarkable generalization of Hall's theorem. Let $F = \{H_1, \dots, H_m\}$ be a family of hypergraphs on a common vertex set $V$. A {\em system of disjoint representatives} is an $m$-element set $\{E_1, \dots, E_m\}$ of pairwise (vertex) disjoint edges such that $E_i\in H_i$ for all $1\leq i \leq m$. The Aharoni and Haxell result gives a sufficient condition for a family of hypergraphs to have a system of disjoint representatives, and their result reduces to the Hall's theorem in the case when the $H_i$ are 1-uniform hypergraphs. Their result was used to prove Ryser's conjecture for $3$-uniform hypergraphs \cite{aharoni}, but perhaps more importantly, their proof introduced topological techniques into this classical branch of combinatorics. The connections with topological combinatorics were further investigated and generalized in \cite{aharoni2}, \cite{ah-be}, \cite{a-b-m}, \cite{chudn}, \cite{haxell}, \cite{kahle}, \cite{meshulam}, \cite{meshulam2}.

\subsection{Our result} The purpose of this paper is to introduce a discrete geometric generalization of Hall's marriage theorem. We say that a subset $X\subset \mb{R}^d$ is in {\em general position} if every subset of size at most $d+1$ is affinely independent. Let $F = \{X_1, \dots, X_m\}$ be a family of finite sets in $\mb{R}^d$. A {\em system of general position representatives}  is a subset $\{x_1, \dots, x_m\}$ in general position such that $x_i \in X_i$ for all $1\leq i \leq m$. For a finite set $X\subset \mb{R}^d$ let $\varphi(X)$ denote the maximal size of a subset of $X$ in general position. We have the following.

\begin{theorem} \label{sgpr}
For every integer $d\geq 1$ there exists  a function $f_d : \mb{N}\to \mb{N}$ such that the following holds. Let $F = \{X_1, \dots, X_m\}$ be a family of finite sets in $\mb{R}^d$. If \[\varphi \left(\bigcup_{i\in I} X_i\right ) \geq f_d(|I|)\] for every non-empty subset $I\subset \{1, \dots, m\}$, then $F$ has a  system of general position representatives.
\end{theorem}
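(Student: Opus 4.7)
The plan is to interpret the desired system of general position representatives as a ``rainbow'' face in a simplicial complex and then apply a topological Hall-type theorem in the style of Aharoni and Haxell. Put $V=\bigcup_{i=1}^m X_i$ and let $\mc{G}$ denote the simplicial complex on $V$ whose faces are the subsets of $V$ that lie in general position in $\mb{R}^d$. A system of general position representatives is then precisely a face of $\mc{G}$ of cardinality $m$ meeting each color class $X_i$ in exactly one vertex. The colored topological Hall theorem of Aharoni, Berger, and Meshulam produces such a rainbow face whenever, for every nonempty $I\subseteq\{1,\dots,m\}$, the induced subcomplex $\mc{G}\bigl[\bigcup_{i\in I}X_i\bigr]$ is topologically $(|I|-2)$-connected.

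Consequently, the theorem reduces to the purely geometric statement: \emph{there exists a function $g_d:\mb{N}\to\mb{N}$ such that $\mc{G}[W]$ is $k$-connected whenever $W\subset\mb{R}^d$ is finite and $\varphi(W)\geq g_d(k)$.} Granted such a bound, setting $f_d(n):= g_d(n-2)$ delivers the hypothesis of the topological Hall theorem and finishes the proof. I would establish the connectivity estimate by induction on $d$, with $d=1$ being trivial since $\mc{G}[W]$ is then the full simplex on $W$. For $d\geq 2$ I would choose a suitable vertex $v\in W$ and split $\mc{G}[W]$ as the union of its star at $v$ (contractible) and the deletion $\mc{G}[W\setminus\{v\}]$; a Mayer--Vietoris gluing then reduces the connectivity of $\mc{G}[W]$ to that of $\mc{G}[W\setminus\{v\}]$ and of the link of $v$. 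The link, after central projection through $v$, can be identified with a general position complex in $\mb{R}^{d-1}$, making the inductive hypothesis in lower dimension applicable. A Ramsey-type pigeonhole is used to select $v$ so that these subproblems inherit $\varphi$-values large enough to feed the recursion.

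The principal obstacle is this connectivity estimate. The complex $\mc{G}[W]$ is \emph{not} a matroid complex, so powerful tools such as shellability or vertex decomposability are not available. Moreover, $W$ may concentrate arbitrarily many points on a single hyperplane, so a naive restriction to hyperplane sections can destroy the hypothesis; this is precisely what forces a Ramsey-type ingredient into the recursion and inflates $g_d$ (and hence $f_d$) into a rapidly growing function of $k$. This matches the abstract's remark that a more elementary argument, presumably bypassing the topology by an iterative swap/augmenting-path procedure combined with a pigeonhole, is possible only at the cost of somewhat stronger quantitative hypotheses.
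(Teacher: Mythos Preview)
Your reduction is exactly the paper's second proof: view a system of general position representatives as a colorful face of the general position complex $G(X)$, and invoke the Kalai--Meshulam / Aharoni--Berger colorful-simplex criterion, so that everything comes down to a connectivity bound $g_d(k)$ for $G(W)$ in terms of $\varphi(W)$. (One technical wrinkle: the criterion needs a \emph{partition} of the vertex set, so the paper works with the disjoint union $X_1\dot\cup\cdots\dot\cup X_m$ rather than the ordinary union; your $V=\bigcup X_i$ would need a small adjustment when the $X_i$ overlap.)

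The genuine gap is in your proposed proof of the connectivity bound. The link of $v$ in $\mc{G}[W]$ is \emph{not} the general position complex of the central projection into $\mb{R}^{d-1}$. A set $S\subset W\setminus\{v\}$ lies in the link iff $S\cup\{v\}$ is in general position in $\mb{R}^d$, which is the conjunction of two conditions: (a) every $d$-subset $T\subset S$ has $T\cup\{v\}$ affinely independent (equivalently, the projection of $S$ from $v$ is in general position in $\mb{R}^{d-1}$), \emph{and} (b) every $(d{+}1)$-subset of $S$ is affinely independent in $\mb{R}^d$. Condition (b) does not follow from (a): already for $d=2$, three collinear points on a line not through $v$ project to three distinct points on a line, so they satisfy (a) but not (b). Thus the link is the intersection of the $\mb{R}^{d-1}$ general position complex of the projection with the $\mb{R}^d$ general position complex of $W\setminus\{v\}$, and your induction on $d$ does not apply to it as stated. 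No ``Ramsey-type pigeonhole'' choice of $v$ removes this structural obstacle, since condition (b) is intrinsic to $W$ and independent of $v$.

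The paper establishes the connectivity of $G(W)$ by a different route. It observes that $G(W)$ is the \emph{$d$-completion} $\Delta_d(M(W))$ of the matroid independence complex, introduces a combinatorial ``$q$-star'' condition on a $d$-dimensional complex $K$, and proves (via Bj\"orner's Nerve theorem, with an inductive cover by stars in $\Delta_d(K)$) that if $K$ is $(2k{+}2)$-star then $\Delta_d(K)$ is $k$-connected. A direct hyperplane count then shows $M(W)$ is $(2k{+}2)$-star once $\varphi(W)>d\binom{2k+2}{d}$, giving $g_d(k)$ in $O(k^d)$. So the induction is on $k$ (through the nerve argument), not on $d$.
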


Notice that for $d=1$,  a set is in general position if its elements are pairwise distinct. Therefore we can set $f_1(k) = k$, in which case Theorem \ref{sgpr} reduces to Hall's theorem. 

Once the existence of the functions $f_d(k)$ has been established, a natural goal is to obtain good general upper bounds on these functions. In general we are interested in asymptotic bounds, that is,  when $d$ is fixed and the number of sets in the family $F$ grows. 

Let us illustrate how the the size of $F = \{X_1, \dots, X_m\}$ plays a role. Suppose $m \leq d+1$. We claim that if $\varphi \left(\bigcup_{i\in I} X_i\right ) \geq |I|$ for every non-empty subset $I\subset \{1, \dots, m\}$, then $F$ has a  system of general position representatives (which is the same condition as in Hall's theorem). This follows from the {\em matroid intersection theorem} due to Edmonds \cite{edmonds}. To see this, let the ground set be the disjoint union $E = X_1 \dot\cup \cdots \dot\cup X_m$. (We allow for the same point to appear in several $X_i$, but we keep track of its multiplicity.) Let $M_1$ be the matroid on $E$ whose independent sets are the affinely independent subsets, and let $M_2$ be the partition matroid induced by $X_1, \dots, X_m$. Let $r_1$ and $r_2$ be the respective rank functions. Given a subset $S\subset E$, let $I\subset \{1, \dots, m\}$ be the maximal subset such that $\bigcup_{i\in I} X_i \subset S$. We then have $r_1(S)\geq r_1\left(\bigcup_{i\in I}X_i \right)$ and $r_2(E - S) \geq m - |I|$. The matroid intersection theorem implies that $F$ has a system of general position representatives if $r_1(S) \geq |I|$ for every non-empty subset $S\subset E$. This inequality holds by our hypothesis since $r_1(S) = \min \{d+1, \varphi(S)\} \geq \min \{d+1, \varphi \left(\bigcup_{i\in I}X_i \right) \} \geq |I|$.

It is also easily seen that when $m>d+1$, the condition $\varphi\left(\bigcup_{i\in I}X_i \right) \geq | I |$ is not sufficient to guarantee a system of general position representatives. Suppose $|X_1|$ $=\cdots$ $ = |X_{m-1}| =1$ and that $\bigcup_{i=1}^{m-1}X_i$ is in general position in $\mb{R}^d$. From every hyperplane spanned by a $d$-tuple from $\bigcup_{i=1}^{m-1}X_i$ choose an additional point, at random, to be included in the set $X_m$. Thus $X_m$ consists of $\binom{m-1}{d}$ points in general position. For every non-empty subset $I\subset \{1, \dots, m\}$ we have $\varphi\left(\bigcup_{i\in I}X_i \right) \geq | I |$, but $F$ has no system of general position representatives.

\subsection{Outline of paper} We will present two proofs for the existence of the functions $f_d(k)$. The first proof uses an elementary pigeon-hole argument and gives an upper bound in $O(k^{d+1})$. This is given in Section \ref{elementary}. Our second proof uses more sophisticated techniques and gives an upper bound in $O(k^d)$. This is given in Section \ref{non-elem}, while the main auxiliary result (Theorem \ref{gencom}) is proved in Section \ref{dusty}. We do not know if this bound is optimal, and it is an interesting problem to determine better bounds on $f_d(k)$. The reader familiar with matroids will notice that many of our arguments rely on properties of the underlying matroid of the point set. This leads to generalizations of our results which will be discussed further in Section \ref{remarks}. (All matroids arising in our setting are loopless, so this will be implicitly assumed throughout.)

Just as the seminal result of Aharoni and Haxell, our second proof of Theorem \ref{sgpr} relies on topological methods, and we assume the reader is familiar with some basic notions of combinatorial topology. By using a result of Kalai and Meshulam \cite[Proposition 3.1]{kalai-mesh} (also appearing implicitly in \cite{aharoni2}, \cite{aharoni1}, and \cite{meshulam}), Theorem \ref{sgpr} can be reduced to the problem of showing that a certain simplicial complex is highly connected. We remind the reader that a topological space $X$ is {\em $k$-connected } if every map $f \colon \mb{S}^i\to X$ extends to a map $\hat{f} \colon \mb{B}^{i+1} \to X$ for $i = -1, 0, 1, \dots, k$. Here $\mb{B}^{i+1}$ denotes the $(i+1)$-dimensional ball whose boundary is the $i$-dimensional sphere $\mb{S}^i$, and $(-1)$-connected means non-empty. The following observation is sufficient for our application: {\em A simplicial complex is $k$-connected if and only if its $(k+1)$-skeleton is $k$-connected.}

Before getting to the details, let us conclude with a few words about the simplicial complex arising in our second proof of Theorem \ref{sgpr}. It was made explicit in \cite{meshulam}, that the key idea in the Aharoni and Haxell result is to capture {\em pairwise disjointness} among the members in a family of sets. This can be encoded by the {\em disjointness graph} of the family, and the resulting simplicial complex is the {\em clique complex} of the disjointness graph. However, the general position property is not a pairwise condition (for $d\geq 2$), and to encode the subsets in general position requires a simplicial complex, the {\em independence complex} of the underlying matroid of the point configuration. This in turn requires a higher-dimensional version of a clique complex, which we call the {\em completion}. A crucial observation concerning the completion of a complex is Lemma \ref{q-star}, which gives a local combinatorial condition on a simplicial complex which guarantees that its completion is $k$-connected.

\section{Proof of Theorem \ref{sgpr}} \label{elementary}

For positive integers $d$ and $k$ let
\[A_d(k) \coloneqq \begin{cases} k &\mbox{if } k\leq d+1 \\ 
d\binom{k-1}{d}+1 & \mbox{if } k > d+1.\end{cases} \]
Notice that $A_d(k)$ is in $O(k^d)$.

\begin{lemma}
  \label{lemUp}
  Let $k$ be a positive integer. If $S$ and $T$ are sets in general position in $\:\mb{R}^d$ where $|S| = k-1$ and $|T|\geq A_d(k)$, then there exists a point $p$ in $T$ such that $S\cup \{p\}$ is in general position.
\end{lemma}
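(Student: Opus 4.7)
The plan is to count the ``bad'' points in $T$: those $p \in T$ for which $S \cup \{p\}$ fails to be in general position. Since $S$ itself is in general position, every affinely dependent subset of $S \cup \{p\}$ of size at most $d+1$ must contain $p$. So (ignoring $p \in S$, which makes $S\cup\{p\}=S$ trivially fine) a point $p \in T \setminus S$ is bad precisely when $p$ lies in the affine hull $\operatorname{aff}(S')$ of some $S' \subset S$ with $|S'| \leq d$. I would then find a point of $T$ outside the union of these affine flats, splitting into two cases according to the definition of $A_d(k)$.

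For $k \le d+1$, every such $\operatorname{aff}(S')$ is contained in $\operatorname{aff}(S)$, an affine subspace of dimension at most $|S|-1 = k-2$. Since $T$ is in general position and $k \le d+1$, any $k$ points of $T$ form an affinely independent set, hence span a $(k-1)$-dimensional affine flat; in particular no $k$ points of $T$ can lie in $\operatorname{aff}(S)$. Thus $\operatorname{aff}(S)$ contains at most $k-1$ points of $T$, and the hypothesis $|T| \ge A_d(k) = k$ produces a good $p \in T \setminus \operatorname{aff}(S)$.

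For $k > d+1$, I would first reduce to size-$d$ subsets of $S$: since $|S| = k-1 \ge d$, any $S'\subset S$ with $|S'|<d$ can be extended to some $d$-element $S''\subset S$, and then $\operatorname{aff}(S')\subset \operatorname{aff}(S'')$. So the bad points in $T$ lie in
\[
\bigcup_{\substack{S'\subset S\\ |S'|=d}} \operatorname{aff}(S').
\]
Each $S'$ has $|S'| = d \le d+1$ and sits inside the general-position set $S$, so $S'$ is affinely independent and $\operatorname{aff}(S')$ is a hyperplane of dimension exactly $d-1$. General position of $T$ forbids $d+1$ points of $T$ from lying in any such hyperplane (they would be affinely dependent), so $|T\cap \operatorname{aff}(S')| \le d$. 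Summing over the $\binom{k-1}{d}$ choices of $S'$ bounds the number of bad points in $T$ by $d\binom{k-1}{d}$, and the assumption $|T| \ge d\binom{k-1}{d}+1$ leaves at least one good point.

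No genuine obstacle is expected: the argument is a direct pigeonhole on a union of affine flats. The only delicate point is making sure the bookkeeping for which subsets $S'$ of $S$ need to be considered is correct (reducing to $|S'|=d$ when $|S|\ge d$, and collapsing everything to $\operatorname{aff}(S)$ when $|S| < d$), which is exactly why the definition of $A_d(k)$ splits at $k = d+1$.
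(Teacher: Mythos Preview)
Your proof is correct and follows essentially the same approach as the paper: for $k > d+1$ both arguments count that at most $d\binom{k-1}{d}$ points of $T$ can lie on the $\binom{k-1}{d}$ hyperplanes spanned by $d$-subsets of $S$. The only difference is that for $k \le d+1$ the paper simply invokes the matroid augmentation axiom, whereas you spell out the equivalent geometric fact that $\operatorname{aff}(S)$, being $(k-2)$-dimensional, can contain at most $k-1$ points of the general-position set $T$.
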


\begin{proof}
For $k\leq d+1$ the result is a consequence of the augmentation property of the underlying matroid of a set of points in $\mathbb{R}^d$ (the independent sets are the affinely independent sets). Suppose now that $k\geq d+2$ and that $T$ is a set of points in general position with $|T| \geq A_d(k)$. Notice that $S$ spans $\binom{k-1}{d}$ affine hyperplanes. In each of these hyperplanes there can be at most $d$ points from $T$ since $T$ is in general position. Therefore there exists a point $p$ in $T$ which does not lie in any of these hyperplanes, implying that $S\cup \{p\}$ is in general position.
\end{proof}

Now let $B_d(k)=k(A_d(k)-1)+1.$ Notice that $B_d(k)$ is in $O(k^{d+1})$.

\begin{theorem}
Let $F=\{X_1, \ldots, X_m\}$ be a family of finite sets in $\mathbb{R}^d$. If 
\[\varphi\left(\bigcup_{i\in I} X_i\right)\geq B_d(|I|)\] for every non-empty subset $I\subset \{1,\ldots,m\}$, then $F$ has a system of general position representatives.
\end{theorem}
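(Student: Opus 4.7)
The plan is to proceed by induction on $m$, combining the one-step extension given by Lemma \ref{lemUp} with a single pigeonhole on the global general position hypothesis. The base case $m = 1$ is immediate: the hypothesis at $I = \{1\}$ gives $\varphi(X_1) \geq B_d(1) = 1$, so $X_1$ is non-empty and any singleton is a valid system of general position representatives.

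For the inductive step, the key observation is that the hypothesis applied to the full index set $I = \{1, \dots, m\}$ yields a set $G \subset \bigcup_i X_i$ in general position with
\[ |G| \;\geq\; B_d(m) \;=\; m(A_d(m)-1) + 1. \]
Since $G$ is the union of the $m$ sets $G \cap X_1, \dots, G \cap X_m$, the pigeonhole principle forces some index $i$ with $|G \cap X_i| \geq \lceil B_d(m)/m \rceil = A_d(m)$. Thus the single set $X_i$ already contains a configuration of $A_d(m)$ points in general position, i.e.\ $\varphi(X_i) \geq A_d(m)$.

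Now remove $X_i$ from the family and consider $F' = F \setminus \{X_i\}$, which has $m-1$ members. For every non-empty $J \subset \{1,\dots,m\} \setminus \{i\}$, the inequality $\varphi(\bigcup_{j \in J} X_j) \geq B_d(|J|)$ still holds verbatim from the original hypothesis, so $F'$ satisfies the assumption of the theorem. By the inductive hypothesis, $F'$ admits a system of general position representatives $S = \{x_j : j \neq i\}$ of size $m-1$.

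It remains to find $x_i \in X_i$ such that $S \cup \{x_i\}$ is in general position. Apply Lemma \ref{lemUp} with $k = m$, taking $S$ (which has size $m-1 = k-1$ and is in general position) together with a subset $T \subset X_i$ of size $A_d(m) = A_d(k)$ in general position, whose existence was guaranteed by the pigeonhole step. The lemma produces a point $p \in T \subset X_i$ with $S \cup \{p\}$ in general position, and setting $x_i \coloneqq p$ completes the system. The argument is almost mechanical; the only point that needs to be handled with care is the arithmetic that forces $\lceil B_d(m)/m \rceil$ to be exactly $A_d(m)$, which is precisely why $B_d(m)$ was defined as $m(A_d(m)-1)+1$.
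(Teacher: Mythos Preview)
Your proof is correct and is essentially the paper's argument recast as an induction on $m$: the paper first performs a downward pigeonhole pass to relabel so that $\varphi(X_i)\geq A_d(i)$ for every $i$, and then builds the representatives upward via Lemma~\ref{lemUp}, which is exactly what unwinding your recursion produces. The only cosmetic difference is that you extract a single rich $X_i$, recurse, and extend, whereas the paper separates the relabeling and the extension into two explicit passes.
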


\begin{proof} By the hypothesis, $\varphi\left(\bigcup_{i=1}^m X_i\right) \geq m(A_d(m)-1) + 1$. By the pigeon-hole principle, there are at least $A_d(m)$ points in general position belonging to one of the sets $X_1, \dots, X_m$, so we may assume $\varphi(X_m)\geq A_d(m)$. Using the hypothesis for $I=\{1,\ldots,m-1\}$, the same reasoning implies that there are $A_d(m-1)$ points in general position belonging to one of the sets $X_1$, $\ldots$, $X_{m-1}$, so we may assume $\varphi(X_{m-1}) \geq A_d(m-1)$. Proceeding downwards we may assume that $\varphi(X_i) \geq A_d(i)$ for each $i\in \{1,\ldots,m\}$. 
	
Now we use Lemma \ref{lemUp} upwards. We take a point $p_1 \in X_1$. Suppose we have selected points $p_i\in X_i$ for $i\in \{1,2,\ldots,k-1\}$ such that $\{p_1,\ldots,p_{k-1}\}$ is in general position. Then Lemma \ref{lemUp} allows us to select a point $p_k \in X_k$ such that $\{p_1,\ldots,p_k\}$ is in general position. We continue up to $k=m$ to get the desired system of general position representatives.  
\end{proof}

\section{A better upper bound by topological methods}\label{non-elem}

\subsection{The general position complex}
Let $X\subset \mb{R}^d$ be a finite (multi)set. Let us define the {\em general position complex} of  $X$, denoted by $G(X)$, to be the simplicial complex \[G(X) \coloneqq \{S \subset X \; : \; S \mbox{ is in general position in } \mb{R}^d \}.\] Note that we allow for $X$ to have repeated points. The number of vertices of $G(X)$ is the cardinality of $X$, counting multiplicities. A key observation is that the connectivity of $G(X)$ can be bounded below in terms of $d$ and $\varphi(X)$.

\begin{theorem} \label{gencom} 
For all integers $d\geq 1$ and $k\geq -1$ there exists a minimal positive integer $g_d(k)$ such that the following holds. If $X\subset \mb{R}^d$ is a finite (multi)set with $\varphi(X) \geq g_d(k)$, then $G(X)$ is $k$-connected.
\end{theorem}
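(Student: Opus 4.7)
My plan is to recognize $G(X)$ as a higher-dimensional analogue of a clique complex and then apply a topological connectivity result to the resulting complex. Given a simplicial complex $K$ of dimension at most $d$, define its \emph{completion} $\widehat{K}$ to be the complex whose faces are the subsets $S\subset V(K)$ such that every $(d+1)$-subset of $S$ is a face of $K$; for $d=1$ this recovers the standard clique complex construction. Let $I(X)$ denote the independence complex of the affine matroid on $X$, whose faces are precisely the affinely independent subsets of $X$ (automatically of size at most $d+1$). Directly from the definition of general position, one checks $G(X)=\widehat{I(X)}$, so the theorem reduces to showing that $\widehat{I(X)}$ is $k$-connected whenever $\varphi(X)$ is large enough.

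The next step is to invoke the main auxiliary result, Lemma~\ref{q-star}, to be proved in Section~\ref{dusty}, which provides a local combinatorial condition on a complex $K$ --- morally, ``every face of bounded dimension has a sufficiently rich link in the matroid-theoretic sense'' --- under which $\widehat{K}$ is $k$-connected. To apply this to $I(X)$ one must check that the condition holds whenever $\varphi(X)\geq g_d(k)$ for an appropriately chosen $g_d$. The verification would be driven by Lemma~\ref{lemUp}: given any face $\sigma\in I(X)$, the $d$-subsets of $\sigma$ determine a bounded number of affine hyperplanes, and a general-position subset of $X$ of size at least $g_d(k)$ will contain many points avoiding all these hyperplanes and hence matroid-independent from $\sigma$. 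Tracking how the slack in $\varphi(X)$ compounds across faces of successively higher dimension in the local condition will determine $g_d(k)$, and with careful bookkeeping this should produce the $O(k^d)$ bound announced in the paper outline.

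The main obstacle, as I see it, is Lemma~\ref{q-star} itself, which carries the topological content of the argument. Unlike a clique complex, a simplex of $\widehat{K}$ need not be a simplex of $K$, so one cannot cone faces off along a single apex vertex in the naive way. I would attack it by induction on $k$ via simplicial extension: starting from a simplicial map $\mb{S}^k\to\widehat{K}$ produced by simplicial approximation, iteratively cone off subsimplices of the image using apex vertices furnished by the local condition, subdividing only locally so that at each step the combinatorial data one must control remains bounded independently of the original triangulation. Making the extension quantitative --- so that the bound on $\varphi(X)$ depends only on $k$ and $d$, not on the map being extended --- is the crux of the proof.
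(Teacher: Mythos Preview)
Your reduction is exactly the paper's: one identifies $G(X)=\Delta_d(M(X))$ (your $\widehat{I(X)}$), invokes Lemma~\ref{q-star}, and verifies its hypothesis by the hyperplane-counting argument you describe. One small correction: the $q$-star condition is imposed on \emph{arbitrary} $(2k+2)$-subsets $Y\subset X$, not on faces $\sigma\in I(X)$; the point is that the affinely independent $d$-tuples inside such a $Y$ span at most $\binom{2k+2}{d}$ hyperplanes, so $\varphi(X)>d\binom{2k+2}{d}$ guarantees a point of $X$ missing all of them, witnessing the $(2k+2)$-star property.

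Where you diverge from the paper is in your plan for Lemma~\ref{q-star}. You propose a direct simplicial-approximation-and-extension argument in the style of Aharoni--Haxell; the paper instead covers $\Delta_d(K)$ by the stars $\sta_{\Delta_d(K)}(v)$ and applies Bj\"orner's Nerve theorem. The key identities $\sta_{\Delta_d(K)}(v)=\Delta_d(\Gamma_K(v))$ and $\Delta_d(\bigcap_i K_i)=\bigcap_i\Delta_d(K_i)$ reduce the connectivity of each intersection of stars to the $q$-star property of a smaller complex, so the induction runs on $k$ with the parameter dropping by exactly the amount the Nerve theorem requires. This sidesteps the difficulty you identify --- that faces of $\widehat K$ need not lie in $K$ --- because one never cones off inside $\widehat K$ directly; the coning is hidden in the contractibility of stars. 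Your hands-on approach may well work, but controlling the combinatorics of iterated subdivisions so that the bound depends only on $k$ and $d$ is genuinely delicate, and the Nerve-theorem route avoids it entirely.
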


A closely related simplicial complex is the {\em independence complex} of $X$, denoted by $M(X)$, defined as \[M(X) \coloneqq \{S \subset X \: : \: S  \mbox{ is affinely independent}\}.\] The simplices of $M(X)$ are the independent sets of a matroid, the underlying matroid of $X$, which has rank $r = \min \{\varphi(X) , d+1\} = \dim M(X) + 1$. Note that $M(X)$ is the $(r-1)$-skeleton of $G(X)$.

\begin{remark} \label{2nd remark} We postpone the proof of Theorem \ref{gencom}, but here we note the following special cases. 
\begin{itemize}
\item For $d = 1$, a multiset $X$ consists of $n = \varphi(X)$ distinct points with mutliplicities $m_1, \dots, m_n$. The corresponding general position complex is the join of $n$ discrete sets of points. That is, $G(X) = V_1 \ast \cdots \ast V_n$, where $|V_i| = m_i$. If $|V_i|=1$ for any $i$, then $G(X)$ is contractible. If $|V_i| > 1$ for all $i$ it is known that $G(X)$ is homotopic to a wedge of $(n-1)$-dimensional spheres which is $(n-2)$-connected. Therefore $g_1(k) = k+2$. 
\item If $k \leq d-1$, then $g_d(k) = k+2$. In this case $G(X) = M(X)$, and the claim follows from the well-known fact that the independence complex of a rank $r$ matroid is $(r-2)$-connected (see e.g. \cite{bjorn, BjKoLo}).
\end{itemize}
\end{remark}

\subsection{Colorful simplices}
Let $K$ be a simplicial complex on the vertex set $V$, and let $V = V_1 \cup \cdots \cup V_m$ be a partition. A simplex $S\in K$ is called {\em colorful} if $|S\cap V_i| = 1$ for all $1\leq i \leq m$. For a non-empty subset $I \subset \{1, \dots, m\}$ let $K(I)$ denote the {\em induced subcomplex} $K\left[\bigcup_{i\in I}V_i\right]$. 

The following sufficient condition for the existence of a colorful simplex in $K$ was given in \cite[Proposition 3.1]{kalai-mesh} (where it is stated in terms of rational homology rather than connectedness), and in a more general form in \cite[Theorem 4.5]{aharoni2}.

\begin{prop}[Kalai and Meshulam] \label{colorful}
  Let $K$ be a simplicial complex on the vertex set $V$ with partition $V = V_1 \cup \cdots \cup V_m$. If the induced subcomplex $K(I)$ is $(|I|-2)$-connected for every non-empty subset $I\subset\{1, \dots, m\}$, then $K$ contains a colorful simplex.
\end{prop}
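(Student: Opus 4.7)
The plan is to argue by contradiction: assume that $K$ has no colorful simplex, and then produce a map $\mb{S}^{m-2}\to K$ which is essential in reduced homology, contradicting the hypothesis that $K = K(\{1,\dots,m\})$ is $(m-2)$-connected.

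First I would define a simplicial map $\phi : K \to \partial \Delta^{m-1}$ by sending every vertex of $V_i$ to the $i$th vertex $e_i$ of the standard $(m-1)$-simplex $\Delta^{m-1}$. Since $K$ has no colorful simplex, the image of every simplex of $K$ misses at least one $e_i$ and is therefore a \emph{proper} face of $\Delta^{m-1}$, so $\phi$ in fact lands in $\partial \Delta^{m-1} \cong \mb{S}^{m-2}$.

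Next I would construct a continuous map $\psi : \partial \Delta^{m-1} \to K$ in the opposite direction, by induction on the skeleta of $\partial \Delta^{m-1}$, maintaining the invariant that each face $\Delta_J$ (for $\emptyset \neq J \subsetneq \{1,\dots,m\}$) is mapped into the induced subcomplex $K(J)$. On the $0$-skeleton, the hypothesis with $|I|=1$ says $K(\{i\})$ is non-empty, so I pick $v_i \in K(\{i\}) \subseteq V_i$ and set $\psi(e_i):=v_i$. For the inductive step, suppose $\psi$ has been defined on the $(j-1)$-skeleton compatibly with the invariant; then for each $j$-dimensional face $\Delta_J$ (so $|J|=j+1$) with $j\leq m-2$, the partial map $\psi|_{\partial \Delta_J}:\mb{S}^{j-1}\to K(J)$ extends continuously over $\Delta_J$ into $K(J)$ because $K(J)$ is $(j-1)$-connected by hypothesis. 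Iterating up to $j=m-2$ defines $\psi$ on all of $\partial \Delta^{m-1}$.

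By construction, $\phi \circ \psi : \partial \Delta^{m-1} \to \partial \Delta^{m-1}$ fixes every vertex $e_i$ and sends each face $\Delta_J$ into itself. A routine cell-by-cell induction using the contractibility of each face shows that such a map is homotopic to $\mathrm{id}_{\partial \Delta^{m-1}}$, hence induces the identity on $\tilde H_{m-2}(\partial \Delta^{m-1}) \cong \mb{Z}$. Therefore $\psi_*$ is injective on $\tilde H_{m-2}$, forcing $\tilde H_{m-2}(K) \neq 0$; but the $(m-2)$-connectedness of $K$ gives $\tilde H_{m-2}(K)=0$ (via Hurewicz), a contradiction. The main technical care needed lies in the inductive construction of $\psi$ — keeping the face-to-$K(J)$ invariant consistent on overlapping boundaries — together with the companion cell-by-cell homotopy showing $\phi \circ \psi \simeq \mathrm{id}$; once these are in place, the contradiction is immediate.
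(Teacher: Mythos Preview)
The paper does not actually prove this proposition; it is quoted (with attribution to Kalai and Meshulam, \cite[Proposition~3.1]{kalai-mesh}, and to \cite[Theorem~4.5]{aharoni2}) and used as a black box. So there is no ``paper's own proof'' to compare against.

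Your argument is correct and is essentially the standard proof one finds in the cited sources: the coloring map $\phi:K\to\partial\Delta^{m-1}$, the skeleton-by-skeleton construction of a section $\psi$ using the connectivity hypotheses on the $K(I)$, and the carrier-type homotopy $\phi\circ\psi\simeq\mathrm{id}_{\partial\Delta^{m-1}}$ giving a nontrivial class in $\tilde H_{m-2}(K)$. The two points you flag as needing care (consistency of $\psi$ on overlapping faces, and the cell-by-cell homotopy) are exactly the right ones, and both go through as you indicate.
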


\begin{proof}[Second proof of Theorem \ref{sgpr}]
Let $F = \{X_1, \dots, X_m\}$ be a family of finite sets in $\mb{R}^d$ and let $X = X_1 \dot\cup \cdots \dot\cup X_m$ (that is, counting multiplicities). The members of $F$ induce a partition of the vertex set of $G(X)$, and $F$ has a system of general position representatives if and only if the general position complex $G(X)$ contains a colorful simplex. If $\varphi\left( \bigcup_{i\in I} X_i\right) \geq g_d(|I|-2)$ for every non-empty subset $I\subset \{1, \dots, m\}$, then, by Theorem \ref{gencom}, $G(X)$ satisfies the conditions of Proposition \ref{colorful}. Therefore $f_d(k)$ can be bounded above by  $g_d(k-2)$. 
\end{proof}

\begin{remark}
  In the next section we give an upper bound on $g_d(k)$ which is in $O(k^d)$.
\end{remark}

\section{Proof of Theorem \ref{gencom}}\label{dusty}

\subsection{The completion of a simplicial complex}
Let $k$ be a positive integer and $S$ a finite set with $|S| \geq k$. The collection of all subsets of $S$ of size at most $k$ is denoted by \[[S]_{k} \coloneqq \{T\subset S \: : \: |T|\leq k\}.\] Let us also define $[S]_0 \coloneqq \emptyset$.

Let $K$ be a simplicial complex of dimension $d$ on the vertex set $V$. For the proof of Theorem \ref{gencom} we need the following simplicial complexes associated with $K$.

\bigskip

For a vertex $v\in V$, let $\sta_K(v)$ denote the {\em star} of $v$, which is defined as 
\[\sta_K(v) \hspace{1ex} \coloneqq \hspace{1ex} \{ S\subset V \: : \: S\cup \{v\} \in K \}.\] Notice that $\sta_K(v)$ is always non-empty since $v\in \sta_K(v)$. Also, if $L$ is a subcomplex of $K$ and $v$ is a vertex of $L$, then $\sta_L(v) \subset \sta_K(v)$.

\bigskip

For a vertex $v\in V$, let $\Gamma_K(v)$ denote the {\em neighborhood complex} of $v$, which is defined as
 \[\Gamma_K(v) \hspace{1ex} \coloneqq \hspace{1ex} \mbox{st}_K(v) \hspace{1ex} \cup \hspace{1ex} \{S \subset V - \{v\} \: :\: S \in K, |S| = d+1, [S]_d \subset \mbox{st}_K(v)\}.\] 
We warn the reader about the subtle dependence on $d = \dim K$. For instance, if $K$ is $0$-dimensional, i.e. a set of isolated vertices, then $\Gamma_K(v) = K$. However, if $K$ has positive dimension and $v$ is an isolated vertex of $K$, then $\Gamma_K(v) = \{v\}$. This shows that if $L$ is a subcomplex of $K$, then it is not generally true that $\Gamma_L(v)$ is a subcomplex of $\Gamma_K(v)$.

\bigskip

For $j\geq d$, let $\Delta_j(K)$ denote the {\em $j$-completion} of $K$, which is defined as \[\Delta_j(K) \hspace{1ex} \coloneqq \hspace{1ex} K \hspace{1ex} \cup \hspace{1ex} \{ S \subset V \: : \: |S|\geq j+2 , [S]_{j+1}\subset K\}.\] Let us also define $\Delta_j(\emptyset) \coloneqq \emptyset$. Notice that if $K$ is $0$-dimensional then $\Delta_0(K)$ is the $(|V|-1)$-dimensional simplex. Also, if $j >\dim K$, then $\Delta_j(K) = K$. Consequently, if $L$ is a subcomplex of $K$, then $\Delta_d(L) \subset \Delta_d(K)$. 

\begin{prop} \label{iden1}
Let $K$ be a simplicial complex of dimension $d$ and let $\{K_i\}_{i\in I}$ be a finite family of subcomplexes of  $K$. Then \[\Delta _d\left(\bigcap_{i\in I} K_i \right) \; = \; \bigcap_{i\in I} \Delta_d(K_i).\]
\end{prop}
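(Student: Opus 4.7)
The plan is to verify both inclusions of the claimed set-theoretic equality directly from the definition of $\Delta_d$, organizing the argument around a dichotomy on the cardinality of the candidate simplex. Write $J = \bigcap_{i \in I} K_i$, and recall that for any simplicial complex $L$ we have $\Delta_d(L) = L \cup \{S : |S| \geq d+2,\ [S]_{d+1} \subset L\}$. Because each $K_i$ is a subcomplex of the $d$-dimensional complex $K$, every $K_i$ has dimension at most $d$, so no set $S$ with $|S| \geq d+2$ can itself be a simplex of $K_i$; in that range, $S \in \Delta_d(K_i)$ reduces to $[S]_{d+1} \subset K_i$. For $|S| \leq d+1$, on the other hand, $S \in \Delta_d(K_i)$ reduces to $S \in K_i$, since the supplementary condition in the definition is vacuous. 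The same remarks apply to $J$, which is also a subcomplex of $K$.

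For the forward inclusion $\Delta_d(J) \subset \bigcap_{i} \Delta_d(K_i)$, I would take $S \in \Delta_d(J)$ and split cases: either $S \in J$, in which case $S \in K_i \subset \Delta_d(K_i)$ for every $i$; or $|S| \geq d+2$ and $[S]_{d+1} \subset J \subset K_i$ for every $i$, again yielding $S \in \Delta_d(K_i)$. For the reverse inclusion, suppose $S \in \bigcap_i \Delta_d(K_i)$. If $|S| \leq d+1$, the first half of the dichotomy gives $S \in K_i$ for every $i$, hence $S \in J \subset \Delta_d(J)$. If instead $|S| \geq d+2$, the second half forces $[S]_{d+1} \subset K_i$ for every $i$, and intersecting over $i$ yields $[S]_{d+1} \subset J$, so $S \in \Delta_d(J)$.

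I anticipate no serious obstacle here: the argument is a clean unpacking of definitions. The only step that merits attention is the appeal to $\dim K_i \leq d$, which is exactly what allows the characterization of $\Delta_d(K_i)$ to split cleanly according to $|S| \leq d+1$ versus $|S| \geq d+2$; without the ambient dimension bound one would have to carry a third possibility through both inclusions. The degenerate situation $J = \emptyset$ is handled by the convention $\Delta_d(\emptyset) = \emptyset$ stipulated immediately after the definition.
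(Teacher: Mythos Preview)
Your proof is correct and follows essentially the same route as the paper's: both arguments split on whether $|S|\leq d+1$ or $|S|\geq d+2$ and unpack the definition of $\Delta_d$ accordingly. The only cosmetic difference is that for the forward inclusion the paper invokes the monotonicity observation $L\subset L' \Rightarrow \Delta_d(L)\subset\Delta_d(L')$ (recorded just before the proposition) in one line, whereas you repeat the case split; the reverse inclusion is identical.
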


\begin{proof} Since $\bigcap_{i\in I} K_i \subset K_i$, we have $\Delta_d\left(\bigcap_{i\in I}K_i\right)\subset  \Delta_d(K_i)$. Therefore $\Delta_d\left(\bigcap_{i\in I}K_i\right) \subset \bigcap_{i\in I}\Delta_d(K_i)$. 

For the other direction, suppose $S\in \bigcap_{i\in I}\Delta_d(K_i)$.  If $|S| \leq d+1$, then $S \in \bigcap_{i\in I}K_i\subset  \Delta_d\left(\bigcap_{i\in I}K_i\right)$. If $|S| \geq d+2$, then  $[S]_{d+1} \subset K_i$ for every $i\in I$. That is, $[S]_{d+1} \subset \bigcap_{i\in I} K_i$, and therefore $S \in  \Delta_d\left(\bigcap_{i\in I}K_i\right)$.
\end{proof}

\begin{prop}\label{iden2}
Let $K$ be a simplicial complex of dimension $d$ on the vertex set $V$ and let $v\in V$. Then \[\st_{\Delta_d(K)}(v) \; = \; \Delta_d\left(\Gamma_K(v)\right).\]
\end{prop}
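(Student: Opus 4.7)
The plan is to prove the set equality by verifying both inclusions directly from the definitions, organizing the argument by the size of $S$. Since membership in either star depends only on $S\cup\{v\}$, I may assume $v\notin S$, in which case $|S\cup\{v\}|=|S|+1$.

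For the inclusion $\st_{\Delta_d(K)}(v)\subset \Delta_d(\Gamma_K(v))$, suppose $S\cup\{v\}\in\Delta_d(K)$. If $|S|\le d$, then $S\cup\{v\}\in K$ and so $S\in\st_K(v)\subset\Gamma_K(v)$. Otherwise $|S\cup\{v\}|\ge d+2$ and $[S\cup\{v\}]_{d+1}\subset K$. When $|S|=d+1$, the inclusion $S\in[S\cup\{v\}]_{d+1}\subset K$ gives $S\in K$, and for each $U\in [S]_d$ we have $U\cup\{v\}\in [S\cup\{v\}]_{d+1}\subset K$, so $[S]_d\subset \st_K(v)$ and $S$ lies in the second clause of $\Gamma_K(v)$. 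When $|S|\ge d+2$, the same two observations applied to every $T\in [S]_{d+1}$ yield $[S]_{d+1}\subset \Gamma_K(v)$, placing $S$ in $\Delta_d(\Gamma_K(v))$.

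For the reverse inclusion, take $S\in\Delta_d(\Gamma_K(v))$ and show $S\cup\{v\}\in\Delta_d(K)$. If $S\in\st_K(v)$ there is nothing to prove. Otherwise either $S$ lies in the second clause of $\Gamma_K(v)$ (forcing $|S|=d+1$ and $S\in K$) or $|S|\ge d+2$ and $[S]_{d+1}\subset \Gamma_K(v)$; in both cases $|S\cup\{v\}|\ge d+2$, so it suffices to verify $[S\cup\{v\}]_{d+1}\subset K$. For $T\in[S\cup\{v\}]_{d+1}$ I split on whether $v\in T$. If $v\notin T$, then $T\subset S$: in the second-clause case $T$ is a face of $S\in K$, while in the other case $T\in[S]_{d+1}\subset \Gamma_K(v)$ and both clauses of $\Gamma_K(v)$ force $T\in K$. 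If $v\in T$, then $T\setminus\{v\}\in [S]_d$ has size at most $d$, and since the second clause of $\Gamma_K(v)$ contributes only simplices of size exactly $d+1$, $T\setminus\{v\}$ must lie in $\st_K(v)$, so $T=(T\setminus\{v\})\cup\{v\}\in K$.

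The main subtlety is keeping track of the size cutoffs: $\Delta_d$ adds simplices of size at least $d+2$, whereas the second clause of $\Gamma_K(v)$ adds only simplices of size exactly $d+1$. The delicate case is the one where a $(d+1)$-subset $T$ of $S$ with $v\notin T$ arises --- here $T\cup\{v\}$ has size $d+2$, beyond the reach of the hypothesis $[S\cup\{v\}]_{d+1}\subset K$, so one must place $T$ in $\Gamma_K(v)$ via its second clause rather than via the star, and the definition of that clause is precisely what the $(d+1)$-skeleton hypothesis delivers.
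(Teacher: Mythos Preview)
Your proof is correct and follows essentially the same approach as the paper's: a direct verification of both inclusions from the definitions, organized by the size of $S$ relative to $d+1$, with the key observation that a $(d{+}1)$-subset $T\subset S$ not containing $v$ must be placed in $\Gamma_K(v)$ via its second clause. The only difference is cosmetic: you reduce to $v\notin S$ at the outset, whereas the paper carries $S-\{v\}$ through the case analysis. One small caveat: your sentence ``membership in either star depends only on $S\cup\{v\}$'' is immediate for $\sta_{\Delta_d(K)}(v)$ but not quite for $\Delta_d(\Gamma_K(v))$, which is not literally a star; the needed fact is that $v$ is a cone apex of $\Delta_d(\Gamma_K(v))$, which is true but deserves a line of justification (it follows from $[S]_d\subset\sta_K(v)$ for any $S$ in the second clause of $\Gamma_K(v)$).
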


\begin{proof} We first show that $\sta_{\Delta_d(K)}(v) \subset \Delta_d(\Gamma_K(v))$.  Suppose $S \in \sta_{\Delta_d(K)}(v)$, which, by definition, means that \[S \cup \{v\} \; \in \; \Delta_d(K) \; = \; K \; \cup \; \{T \subset V \: : \: |S|\geq d+2 , [S]_{d+1}\subset K\}.\] If $|S\cup \{v\}|\leq d+1$, then $S \cup \{v\}\in K$. This implies that $S\in \sta_K(v)$, and since  $\sta_K(v) \subset \Gamma_K(v) \subset \Delta_d(\Gamma_K(v))$ we have $S\in \Delta_d(\Gamma_K(v))$.

If $|S \cup \{v\}| \geq d+2$, then $[S \cup \{v\}]_{d+1} \subset K$. This implies that for every $T\in [S - \{v\}]_{d}$ we have $T  \in \sta_K(v)$, and
consequently $[S]_{d+1} \subset \Gamma_K(v)$. Therefore $S\in \Delta_d(\Gamma_K(v))$.

\bigskip

It remains to show that $\Delta_d(\Gamma_K(v)) \subset \sta_{\Delta_d(K)}(v)$. Suppose $S \in \Delta_d(\Gamma_K(v))$. 

If $|S|\leq d+1$, then $S \in\Gamma_K(v)$. Furthermore, if $|S - \{v\}| \leq d$ it follows that $S \in \sta_K(v)$. Since $K \subset \Delta_d(K)$ we have $S \in \sta_{\Delta_d(K)}(v)$. On the other hand, if $v\notin S$ and $|S| = d+1$, then, by definition, we have $S\in K$ and $[S]_d \subset \sta_K(v)$. This implies that  $[S \cup \{v\}]_{d+1} \subset K$, and it follows that $S \cup \{v\} \in \Delta_d(K)$, which shows that $S \in \sta_{\Delta_d(K)}(v)$. 

If $|S|\geq d+2$, then $[S]_{d+1} \subset \Gamma_K(v)$. This implies that for every $T\in [S - \{v\}]_d$ we have $T \in \sta_K(v)$, and for every $T \in [S - \{v\}]_{d+1}$ we have $T \in K$. It follows that $[S \cup \{v\}]_{d+1} \subset K$, and therefore $S \cup \{v\} \in \Delta_d(K)$, which shows that $S \in \sta_{\Delta_d(K)}(v)$.
\end{proof}

\subsection{The Nerve theorem}\label{nerves}
Let $F$ be a finite family of sets. The {\em nerve} of $F$, denoted by $N(F)$, is the simplicial complex on the vertex set $F$ whose simplices are the intersecting subfamilies of $F$, that is \[N(F)  \coloneqq \{ G\subset F \: : \: \bigcap_{S\in G}S \neq \emptyset\}.\] We will use the following version of the Nerve theorem which is a consequence of \cite[Theorem 6]{bjorner}.

\begin{theorem}[Bj{\"o}rner]\label{nerve}
  Let $K$ be a simplicial complex and $F = \{K_i\}_{i \in I}$ a finite family of subcomplexes such that $K = \bigcup_{i\in I}K_i$. Suppose every non-empty intersection $\bigcap_{t\in T} K_t$ is $(k + 1 - |T|)$-connected, $T \subset I$. Then $K$ is $k$-connected if and only if $N(F)$ is $k$-connected. 
\end{theorem}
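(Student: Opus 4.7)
The plan is to establish the stronger conclusion that $K$ and $N(F)$ are $k$-equivalent, which immediately yields the claimed ``iff.'' The strategy is to construct a common intermediate complex $\widetilde K$ with canonical projections to both $K$ and $N(F)$, then use the connectivity hypotheses to show each projection is a $k$-equivalence. Concretely, I would take $\widetilde K$ to be the subcomplex of the simplicial join $N(F) * K$ whose simplices have the form $\sigma \cup T$ where $\sigma \in N(F)$ and $T$ is a simplex of $\bigcap_{t \in \sigma}K_t$.

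The projection $q \colon \widetilde K \to K$ forgetting the $N(F)$-coordinate has preimage over a simplex $T \in K$ equal to a cone over $N(\{K_i : T \subset K_i\})$, which is contractible; a simplicial fiber lemma (of Quillen--Bj\"orner type) then shows that $q$ is a homotopy equivalence, independently of the connectivity hypotheses. The projection $p \colon \widetilde K \to N(F)$ forgetting the $K$-coordinate has preimage over $\sigma \in N(F)$ equal to $\sigma * \bigcap_{t \in \sigma}K_t$, whose connectivity is $(|\sigma|-1) + (k+1-|\sigma|) + 1 = k+1$ by the join formula applied to the hypothesis on $\bigcap_{t \in \sigma}K_t$. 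The same fiber lemma then yields that $p$ is a $k$-equivalence, and composing with a homotopy inverse of $q$ exhibits a $k$-equivalence $K \to N(F)$.

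The main obstacle is pinning down the precise form of the fiber lemma, which asserts that a simplicial map is a $k$-equivalence provided the preimage of each closed simplex (or closed star) is $k$-connected. For the projections above this is a routine but fussy verification, since one must check the preimage of each closed star rather than each open simplex, and confirm the join decomposition survives restriction. An alternative elementary route that avoids the fiber lemma entirely is induction on $|I|$: writing $K = K_{i_0} \cup K'$ with $K' = \bigcup_{i \ne i_0}K_i$, and decomposing $N(F) = N(F') \cup C$ where $F' = F \setminus \{K_{i_0}\}$ and $C$ is the cone over $N(\{K_{i_0} \cap K_i\}_{i \ne i_0})$ with apex $K_{i_0}$, one applies the Mayer--Vietoris/van Kampen connectivity gluing lemma (if $X = U \cup V$ with $U, V$ being $k$-connected subcomplexes and $U \cap V$ being $(k-1)$-connected, then $X$ is $k$-connected) to both decompositions. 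The inductive hypothesis applied to $F'$ and to the cover $\{K_{i_0} \cap K_i\}_{i \ne i_0}$ of $K_{i_0} \cap K'$ then transfers connectivity back and forth between $K$ and $N(F)$.
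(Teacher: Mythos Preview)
The paper does not give its own proof of this theorem; it simply cites it as a consequence of \cite[Theorem~6]{bjorner}. Your first approach---building the intermediate complex $\widetilde K\subset N(F)*K$ and analyzing the two projections via a Quillen/Bj\"orner fiber lemma---is precisely the strategy of that cited paper, so in that sense you have reconstructed the intended argument rather than found an alternative.

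A couple of imprecisions are worth flagging. First, the preimage $p^{-1}(\bar\sigma)$ of a \emph{closed} simplex $\sigma\in N(F)$ is not $\sigma*\bigcap_{t\in\sigma}K_t$: a face $\tau\cup T$ with $\tau\subsetneq\sigma$ lies over $\bar\sigma$ but allows $T\in\bigcap_{t\in\tau}K_t$, which is generally larger than $\bigcap_{t\in\sigma}K_t$. The correct object to feed into Bj\"orner's projection lemma is the fiber over the \emph{open} simplex (equivalently the value of the diagram at $\sigma$), namely $\bigcap_{t\in\sigma}K_t$ itself, and the hypothesis that this is $(k+1-|\sigma|)$-connected is exactly what that lemma requires. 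Second, your join computation $(|\sigma|-1)+(k+1-|\sigma|)+1$ treats $\sigma$ as if it were a sphere; a full simplex is contractible, so $\sigma*X$ is contractible regardless of $X$, and that line of reasoning does not isolate the role of the connectivity hypothesis. None of this breaks the argument---it just means the bookkeeping around ``which fiber'' and ``which connectivity'' needs to be done with Bj\"orner's precise formulation in hand.

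Your alternative inductive route via Mayer--Vietoris/van Kampen is a genuinely different and more elementary argument. It avoids the fiber machinery entirely, at the cost of needing the gluing lemma in both directions (to go from pieces to union and back) and a slightly delicate verification that the induced cover $\{K_{i_0}\cap K_i\}_{i\neq i_0}$ of $K_{i_0}\cap K'$ again satisfies the shifted connectivity hypotheses. That route is perfectly viable and arguably more self-contained for this paper's purposes.
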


\subsection{The $q$-star property}
Let $K$ be a simplicial complex of dimension $d$ on the vertex set $V$. For any integer $q\geq 1$, we say that $K$ is {\em $q$-star} if $|V|>q$, and for every subset $Y \subset V$ of size $q$ there exists a vertex $v\in V - Y$ such that  $S \cup \{v\} \in K$ for every simplex $S \in K[Y]$ with $|S|\leq d$.  The following is an extension of a result on clique complexes appearing in \cite[Theorem 3.1]{kahle}, and in a more general form in \cite[Theorem 1.5]{meshulam}.

\begin{lemma} \label{q-star}
Let $K$ be a simplicial complex of dimension $d$ and let $k$ be a non-negative integer. If $K$ is $(2k+2)$-star, then its $d$-completion  $\Delta_d(K)$ is $k$-connected.
\end{lemma}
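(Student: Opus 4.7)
The plan is to argue by induction on $k$. For the base case $k=0$, the $2$-star property supplies, for any $Y=\{u,v\}\subset V$, a common neighbor $w$ with $\{u,w\},\{v,w\}\in K$ (applying the definition with $S=\{u\}$ and $S=\{v\}$, which have size $\leq d$ whenever $d\geq 1$; the case $d=0$ is trivial, since $\Delta_0(K)$ is then the full simplex on $V$). This makes the $1$-skeleton of $K$ path-connected, and since $K\subset \Delta_d(K)$ and $|V|>2$, the $0$-connectivity of $\Delta_d(K)$ follows.

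For the inductive step, I would cover $\Delta_d(K)$ by the family of contractible closed stars $\mathcal{F}=\{\st_{\Delta_d(K)}(v)\}_{v\in V}$ and apply the Nerve theorem (Theorem~\ref{nerve}). By Propositions~\ref{iden1} and~\ref{iden2}, for any $Y\subset V$ one has
\[\bigcap_{v\in Y}\st_{\Delta_d(K)}(v) \;=\; \Delta_d\!\left(\bigcap_{v\in Y}\Gamma_K(v)\right).\]
Writing $L_Y=\bigcap_{v\in Y}\Gamma_K(v)$, the task splits into (i) showing that every non-empty $\Delta_d(L_Y)$ is $(k+1-|Y|)$-connected, and (ii) showing that the nerve $N(\mathcal{F})$ is $k$-connected.

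For (i), the central claim is that the $(2k+2)$-star property of $K$ forces $L_Y$ to inherit a star property whose parameter drops by roughly two for each element of $Y$, so that the induction hypothesis applied to $L_Y$ (with its own dimension, which is at most $d$) yields the required connectivity of $\Delta_d(L_Y)$. The witness vertex for a given subset of $V(L_Y)$ is produced by adjoining $Y$ and invoking the star property of $K$; the verification that the resulting vertex lies in every $\Gamma_K(v)$ for $v\in Y$ proceeds by case analysis on whether the relevant simplex has size below $d$ (using the closed-star clause) or equal to $d+1$ (using the top-dimensional clause) in the two-part definition of $\Gamma_K$. For (ii), a parallel transfer of the $q$-star property from $K$ to $N(\mathcal{F})$, followed by the induction hypothesis, delivers $k$-connectedness of the nerve.

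The main obstacle is the precise bookkeeping in task (i): pinning down the exact decrement of the star parameter per intersection factor, handling the dimension discrepancy between $\dim L_Y$ and $d$ (since $\Delta_d(L_Y)$ collapses to $L_Y$ when $\dim L_Y<d$, whereas the induction hypothesis most naturally controls $\Delta_{\dim L_Y}(L_Y)$), and confirming that the witness $w$ satisfies the joinability condition at dimension $d$ --- which is exactly the subtle point the paper flags immediately after the definition of $\Gamma_K$, namely that this operation does not generally preserve subcomplexes.
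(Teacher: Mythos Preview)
Your plan for part (i) is exactly the paper's: cover $\Delta_d(K)$ by the closed stars $K_v=\sta_{\Delta_d(K)}(v)$, use Propositions~\ref{iden1} and~\ref{iden2} to rewrite $K_W=\bigcap_{v\in W}K_v$ as $\Delta_d(L_W)$ with $L_W=\bigcap_{v\in W}\Gamma_K(v)$, and then show that $L_W$ is $(2k+2-|W|)$-star (which is at least $2(k+1-|W|)+2$ once $|W|\ge 2$) so that the induction hypothesis applies. The case analysis you describe for the witness vertex --- the closed-star clause when $|S\cup\{w\}|\le d$, and the top-dimensional clause of $\Gamma_K$ when $|S\cup\{w\}|=d+1$ --- is precisely what the paper carries out.

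Your approach to part (ii), however, does not work as stated. The induction is on $k$, so the hypothesis only yields $k'$-connectedness of suitable completions for $k'<k$; you cannot bootstrap to $k$-connectedness of $N(\mathcal F)$ that way (and applying the lemma at the same $k$ would be circular). The paper bypasses this entirely with a direct argument: the $(2k+2)$-star property of $K$ supplies, for any $W\subset V$ with $|W|=2k+2$, a vertex $w$ with $\{v,w\}\in K$ for every $v\in W$, so $w\in K_W$ and $K_W\neq\emptyset$. Hence every $(2k+2)$-subset of $V$ is a face of $N(\mathcal F)$, i.e.\ the $(2k+1)$-skeleton of the nerve is the complete $(2k+1)$-skeleton of the simplex on $V$, which is $2k$-connected. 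Since $k$-connectivity is determined by the $(k+1)$-skeleton, $N(\mathcal F)$ is $k$-connected. As for your dimension worry in (i), the paper simply checks the star condition for $L_W$ against simplices with $|S|\le\dim L_W\le d$ and then invokes induction; it does not linger on the distinction between $\Delta_d(L_W)$ and $\Delta_{\dim L_W}(L_W)$ that you flag.
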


\begin{proof} For $d=0$ the statement holds because $\Delta_0(K)$ is a simplex which is contractible. We may therefore assume $d\geq 1$.

If a complex $K$ of dimension $d\geq 1$ is $2$-star, then $K$ is connected which implies that $\Delta_d(K)$ is also connected. So the statement is clearly true for $k=0$, and we proceed by induction on $k$. 

Suppose $K$ is $(2k+2)$-star for $k>0$ and let $V$ be the vertex set of $K$. For each vertex $v\in V$, let $K_v = \sta_{\Delta_d(K)}(v)$. Define the family of subcomplexes $F = \{K_v\}_{v\in V}$. Clearly we have \[\Delta_d(K) = \bigcup_{v\in V} K_{v}.\] For a non-empty subset $W\subset V$, let $K_W = \bigcap_{v\in W}K_v$. Theorem \ref{nerve} implies that $\Delta_d(K)$ is $k$-connected, if we can show the following.
\begin{enumerate}[(i)]
\item $K_W$ is $(k + 1 - t)$-connected for every non-empty subset $W\subset V$ with $|W| = t$.
\item The nerve $N(F)$ is $k$-connected.
\end{enumerate}

\bigskip

Part (i). For every $v\in V$, $K_v$ is a cone which is contractible, and hence $k$-connected. Now consider $W\subset V$ with $|W| = t \geq 2$, and let $L_W = \bigcap_{v\in W} \Gamma_K(v)$. By Propositions \ref{iden1} and \ref{iden2} we have \[K_W \; = \; \bigcap_{v\in W}\sta_{\Delta_d(K)}(v) \; =\; \Delta_d\left(\bigcap_{v\in W} \Gamma_K(v)\right) \; = \; \Delta_d(L_W).\] By induction, it therefore suffices to prove that $L_W$ is $(2(k + 1 - t) + 2)$-star. Also notice that for $t\geq 2$ we have $2k + 2 - t \geq 2(k + 1 - t) + 2$, so it suffices to show that $L_W$ is  $(2k+2- t)$-star.

Let $X$ be the vertex set of $L_W$. Clearly a vertex $v$ belongs to $X$ if and only if $\{v,w\} \in K$ for all $w\in W$. This implies that $|X| > 2k+2-t$, since $K$ is $(2k+2)$-star.

Next, we observe that for every $Y \subset X$ with $|Y| = 2k + 2 - t$ we can find a set $Z\subset X \cup W$ with $|Z| = 2k+2$ such that $Y\cup W \subset Z$. Since $K$ is $(2k+2)$-star, there exists $v\in V - Z$ such that \[S\cup \{v\} \in K \; \mbox{ for every } \; S\in K[Z] \; \mbox{ with } \; |S|\leq d. \label{eq:st} \tag{$\ast$}\] It follows from our previous observation that $v\in X$.

Let $S\in L_W[Y]$ with $|S| \leq \dim L_W \leq d$. We need to show that $S\cup \{v\} \in L_W$, that is, $S\cup \{v\} \in \Gamma_K(w)$ for every $w\in W$. Notice that $S \cup \{w\} \subset Z$, so \eqref{eq:st} may be applied provided $|S\cup \{w\}|\leq d$.

If $|S \cup \{w\}|\leq d$, then \eqref{eq:st} implies that $S\cup \{w\} \cup \{v\} \in K$. This just means that $S\cup \{v\} \in \sta_K(w)$, and consequently $S \cup \{v\} \in \Gamma_K(w)$. 

If $|S\cup \{w\}| = d+1$, then $|S| = d$,  $w \notin S$, and $S \cup \{w\} \in K$. For every $T\in [S \cup \{w\}]_d$, it follows from \eqref{eq:st} that $T \cup \{v\} \in K$. In particular $S \cup \{v\} \in K$ and $[S\cup \{v\}]_d\subset \sta_K(w)$, and since $|S \cup \{v\}| = d+1$, we conclude that $S\cup \{v\} \in \Gamma_K(w)$. 

This shows that $L_W$ is $(2k + 2 - t)$-star.

\bigskip

Part (ii). Clearly $K_W$ is non-empty for any subset $W\subset V$ with $|W| = 2k+2$. Therefore the $(2k+1)$-skeleton of the nerve $N(F)$ is complete, which implies that $N(F)$ is $2k$-connected.\end{proof}

\begin{proof}[Proof of Theorem \ref{gencom}] 
Let $K = M(X)$, the independence complex of $X$. Clearly the general position complex of $X$ is the $d$-completion of $K$, that is, $G(X) = \Delta_d(K)$. We want to show that $G(X)$ is $k$-connected provided $\varphi(X)$ is sufficiently large. In view of Remark \ref{2nd remark}, we may assume that $k\geq d$. We will show that if $\varphi(X) > d\binom{2k+2}{d}$, then $K$ is $(2k+2)$-star. This implies that $G(X)$ is $k$-connected by Lemma \ref{q-star}. Let $S\subset X$ with $|S| = 2k+2$. Let $H$ denote the set of hyperplanes spanned by affinely independent $d$-tuples in $S$. Therefore, if $\varphi(X) >  d\binom{2k+2}{d} \geq d|H|$, then there exists a point $x\in X$ which is not contained in the affine hull of any subset $T\subset S$ with $|T|\leq d$, and consequently $K$ is $(2k+2)$-star. This gives an upper bound on $g_d(k)$ which is in $O(k^d)$.\end{proof}

\section{Concluding remarks} \label{remarks}
The natural problem that arises is to try to determine better (or exact) bounds for the functions $g_d(k)$. We have shown that $g_d(k) = k+2$ for $k \leq d-1$ and $g_d(k) \leq d\binom{2k+2}{d}+1$, otherwise, but we hardly believe this to be optimal. In fact, the exact same proof  (and bound) works in a more general setting, which we now describe.  

Let $M$ be a matroid of rank $r$ on the ground set $E$. We say that a subset $S\subset E$ is {\em uniform} if $S$ is independent or $|S| >r$ and every member of $[S]_r$ is independent. The set of all uniform subsets of a matroid $M$ form a simplicial complex, which call the {\em uniformity complex} of $M$. Obviously, the uniformity complex of a matroid is the $(r-1)$-completion of its independence complex. If we let $\mu(M)$ denote the maximum size of a uniform subset of $M$, then we have the following generalization of Theorem \ref{gencom}.

\begin{theorem}\label{uniformity}
For all integers $r\geq 2$ and $k\geq -1$ there exists a minimal positive integer $h_r(k)$ such that the following holds. If $M$ is matroid of rank $r$ and $\mu(M) \geq h_r(k)$, then the uniformity complex of $M$ is $k$-connected.
\end{theorem}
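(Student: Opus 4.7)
The plan is to adapt the proof of Theorem \ref{gencom} with only minor structural changes, replacing the geometric hyperplane count by a matroid-theoretic rank bound. Let $K$ denote the independence complex of $M$, so that $\dim K = r-1$, and observe that by construction the uniformity complex of $M$ coincides with $\Delta_{r-1}(K)$. The task is therefore to bound $h_r(k)$ so that this $(r-1)$-completion is $k$-connected.

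For the range $k\leq r-2$ essentially no work is needed. The completion $\Delta_{r-1}(K)$ differs from $K$ only in simplices of dimension $\geq r$, so their $(k+1)$-skeleta coincide. Since the independence complex of any rank-$r$ matroid is $(r-2)$-connected, $K$ is $k$-connected, and hence so is $\Delta_{r-1}(K)$. Every rank-$r$ matroid has $\mu(M)\geq r$, so in this range the choice $h_r(k)=r$ already works.

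For $k\geq r-1$, the strategy is to apply Lemma \ref{q-star}, reducing matters to showing that $K$ is $(2k+2)$-star once $\mu(M)$ is sufficiently large. Fix a uniform subset $U\subset E$ with $|U|=\mu(M)$. The key observation, replacing the geometric fact that a hyperplane in $\mb{R}^d$ contains at most $d$ points of a general-position set, is the following rank bound: for every flat $F$ of $M$ of rank $s\leq r-1$, we have $|F\cap U|\leq s$. Indeed, any $s+1$ elements of $F$ form a dependent set, while every subset of $U$ of size at most $r$ is independent by the definition of uniformity; thus $F$ cannot contain more than $s$ elements of $U$.

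Given $Y\subset E$ with $|Y|=2k+2$, we must exhibit $v\in E\setminus Y$ such that $S\cup\{v\}$ is independent for every independent $S\subset Y$ with $|S|\leq r-1$; equivalently, $v$ must avoid $\text{cl}(S)$ for every such $S$. When $\text{rank}(Y)\geq r-1$, every independent $S\subset Y$ of size $\leq r-1$ extends within $Y$ to an independent $(r-1)$-subset, so the bad vertices lie in the union of at most $\binom{2k+2}{r-1}$ flats of rank $r-1$, each meeting $U$ in at most $r-1$ elements; the case $\text{rank}(Y)<r-1$ is even easier, since all bad vertices then sit inside the single flat $\text{cl}(Y)$ of rank $<r-1$. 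Accounting additionally for the at most $2k+2$ elements of $U\cap Y$, the choice
\[ h_r(k) \;:=\; (r-1)\binom{2k+2}{r-1}+(2k+2)+1 \]
forces a good vertex $v\in U\setminus Y$ to exist, verifying the $(2k+2)$-star property; Lemma \ref{q-star} then yields the $k$-connectedness of the uniformity complex. The main point requiring care is isolating the correct matroid analog of the hyperplane bound, but once that is in hand the argument parallels the geometric proof almost verbatim, and the resulting bound on $h_r(k)$ is again of order $O(k^{r-1})$.
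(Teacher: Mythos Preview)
Your proof is correct and follows essentially the same route as the paper: identify the uniformity complex as $\Delta_{r-1}(K)$ for $K$ the independence complex, verify the $(2k+2)$-star property via the bound $|F\cap U|\le \operatorname{rank}(F)$ for any proper flat $F$, and invoke Lemma~\ref{q-star}. Your treatment is in fact more careful than the paper's terse version (you separate out $k\le r-2$ and the case $\operatorname{rank}(Y)<r-1$), and the extra $+(2k+2)$ in your bound is harmless but redundant, since when $\operatorname{rank}(Y)\ge r-1$ every $y\in Y$ already lies in the closure of some independent $(r-1)$-subset of $Y$, so $Y\cap U$ is absorbed into the count $(r-1)\binom{2k+2}{r-1}$.
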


\begin{proof}
  The same argument (as in the proof of Theorem \ref{gencom}) shows that if $\mu(M) > (r-1) \binom{2k+2}{r-1}$, then the independence complex of $M$ is $(2k+2)$-star. The theorem then follows from Lemma \ref{q-star}.
\end{proof}

We find it likely that there should be a sharp distinction in the asymptotic behavior between the function $h_r(k)$ and the corresponding function $g_{r-1}(k)$. More generally, we find it reasonable to expect the orientability of the matroid $M$ to have a strong quantitative effect on the connectivity of the uniformity complex, but we lack any evidence to support this. In fact, the only exact value we know (apart from what is covered by Remark \ref{2nd remark}) is $g_2(2) = h_3(2) = 7$.

In conclusion we mention that, in view of Theorem \ref{uniformity}, it is straightforward to apply Proposition \ref{colorful} to obtain an analogue of Theorem \ref{sgpr} for {\em uniform systems of  representatives}. Further generalizations can also be obtained by using the more general version of Lemma \ref{colorful} appearing in \cite[Theorem 4.5]{aharoni2}. We leave the details to the reader.

\section{Acknowledgments}
We are grateful to the anonymous referee for pointing out a mistake in our original proof and for making several other valuable comments and suggestions.

\end{document}